\newtheorem{theo}{Theorem}
\newtheorem{lemma}[theo]{Lemma}
\numberwithin{theo}{section}
\newcommand{\NN}{\mathbb{N}}
\newcommand{\ZZ}{\mathbb{Z}}
\newcommand{\RR}{\mathbb{R}}
\newcommand{\cT}{\mathcal{T}}
\DeclareMathOperator{\lcm}{lcm}
\def\P{\mathcal{P}}
\def\C{\mathcal{C}}
\newcommand\restr[2]{{
		\left.\kern-\nulldelimiterspace 
		#1 
		\vphantom{\big|} 
		\right|_{#2} 
}}
\title{Crossing Numbers of Billiard Curves in the Multidimensional Box via Translation Surfaces}
\begin{document}

\author{%
Felix Christian Clemen\footnote {Department of Mathematics, Karlsruhe Institute of Technology, 76131 Karlsruhe, Germany, E-mail: \texttt{felix.clemen@kit.edu}.} 
 \and Peter Kaiser\footnote {Department of Mathematics, Karlsruhe Institute of Technology, 76131 Karlsruhe, Germany, E-mail: \texttt{peter.kaiser2@kit.edu}.} 
}

\maketitle
\begin{abstract}\noindent
The billiard table is modeled as an $n$-dimensional box $[0,a_1]\times [0,a_2]\times \ldots \times [0,a_n] \subset \RR^n$, with each side having real-valued lengths $a_i$ that are pairwise commensurable. A ball is launched from the origin in direction $d=(1,1,\ldots,1)$. The ball is reflected if it hits the boundary of the billiard table. It comes to a halt when reaching a corner. We show that the number of intersections of the billiard curve at any given point on the table is either $0$ or a power of $2$. To prove this, we use algebraic and number theoretic tools to establish a bijection between the number of intersections of the billiard curve and the number of satisfying assignments of a specific constraint satisfaction problem.
\end{abstract}

\section{Introduction}
The trajectory of a ball on a billiard table is studied in various areas of mathematics, for example in ergodic theory, Teichmüller theory~\cite{McMullen, Veech1989}, from a geometry perspective~\cite{taba,Veech1992}, or through a combinatorial lens in the setting of arithmetic billiards~\cite{BilliardNdim, veech, Yangcheng, ABilliards}.\par\noindent
We model the \emph{billiard curve} as follows. Let $\mathcal{T} \subset \RR^n$ be a domain and $p\in \mathcal{T}$ a point inside this domain. Additionally, let $d \in \RR^n$ be a direction. The billiard curve is the trajectory starting at $p$ moving in direction $d$, where the direction is reflected if the trajectory reaches the boundary, in the sense of an elastic collision. If it reaches a corner, the motion ceases. A natural question is to determine if for a given domain there exists a periodic trajectory. 
For a specific domain, it might be challenging to decide whether a trajectory is periodic, stops, or has infinite length \cite{Birkhoff}. We are considering a setting where all trajectories have finite length.
\par\noindent  

A set of positive reals $a_1, a_2,\ldots, a_n\in \mathbb{R}$ is called \emph{pairwise commensurable} if all ratios $a_i/a_j$ are rational. In this paper, the domain $\mathcal{T}$ is always the box $\mathcal{T}=[0,a_1]\times [0,a_2]\times \ldots \times [0,a_n]$ with pairwise commensurable side lengths $a_i$, the starting point is the origin, i.e. $p=(0,\ldots,0)$, and the direction $d=(1,\ldots,1)$. We denote by $[n]=\{1,2,\ldots,n\}$ the set of the first $n$ integers. A \emph{corner} is a point $w\in \cT$ with $w_i\in \{0,a_i\}$ for every $i\in[n]$. In this setting, the trajectory always stops at a corner different from the origin.\par\noindent

We ask how often the trajectory crosses a given point within the domain. Given a point $v\in \cT$ let the \emph{crossing number of} $v$, denoted by $m_{\cT}(v)$, be the number of times the trajectory crosses $v$. In the 2-dimensional scenario, i.e. $\mathcal{T}=[0,a_1]\times [0,a_2]$, for every $v\in \mathcal{T}$, trivially $m_{\cT}(v)\in\{0,1,2\}$ simply because the trajectory is the union of lines with only two directions. \par

Our results concern the multi-dimensional case. For any positive real numbers $a_1,\ldots,a_n$ and $\mathcal{T}=[0,a_1]\times [0,a_2]\times \ldots \times [0,a_n]$, we have $m_{\cT}(v)\leq 2^{n-1}$ for a given point $v\in \cT$ because $v$ can only be crossed in one of the two directions $d$ or $-d$ for a given $d\in\{-1,1\}^n$. We give a more precise description of crossing numbers.

\begin{theo}\label{Thm:Main}
Let $T=[0,a_1]\times [0,a_2]\times \ldots \times [0,a_n]$ be the box with pairwise commensurable side lengths $a_i>0$. Then for every $v\in \mathcal{T}$, we have $m_{\cT}(v)\in\{0\}\cup \{2^k: k\geq 0\}$.
\end{theo}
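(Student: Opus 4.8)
The plan is to straighten the trajectory by the standard unfolding trick and then convert the crossing count into an arithmetic problem that turns out to be linear over $\mathbb{F}_2$. First I would use commensurability to rescale the whole picture so that $a_1,\dots,a_n\in\ZZ_{>0}$; since crossing numbers are invariant under a global dilation, this loses nothing. Writing $L=\lcm(a_1,\dots,a_n)$, unfolding replaces the reflected path by the straight ray $t\mapsto(1,\dots,1)t$ read through the folding maps, so that the position at time $t$ is $\gamma(t)=(\varphi_{a_1}(t),\dots,\varphi_{a_n}(t))$, where $\varphi_a$ is the triangle wave of period $2a$ with $\varphi_a(t)=t$ on $[0,a]$ and $\varphi_a(t)=2a-t$ on $[a,2a]$. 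Since $\varphi_{a_i}(t)\in\{0,a_i\}$ exactly when $a_i\mid t$, a corner is reached iff $t$ is a common multiple of all $a_i$; hence the first corner after the origin is at $t=L$ and the trajectory is $\restr{\gamma}{[0,L]}$.

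Next I would reduce the crossing number to counting over a full period. For a non-corner target $v$ I would count $N:=\#\{t\in[0,2L):\gamma(t)=v\}$ and exploit the reflection symmetry $\gamma(2L-t)=\gamma(t)$, valid because $2L$ is a period of every $\varphi_{a_i}$ and each $\varphi_{a_i}$ is even. The only fixed points of $t\mapsto 2L-t$ on $[0,2L)$ are $0$ and $L$, both corners, so for non-corner $v$ the $N$ solutions pair up and exactly half of them lie in $(0,L)$; thus $m_{\cT}(v)=N/2$, and in particular $N$ is even. Corners are trivial, since a corner can only be met at $t\in\{0,L\}$, giving $m_{\cT}(v)\in\{0,1\}$ there.

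For the arithmetic core I would note that $\varphi_{a_i}(t)=v_i$ iff $t\equiv v_i$ or $t\equiv -v_i\pmod{2a_i}$. Encoding the branch chosen in coordinate $i$ by a sign $\epsilon_i\in\{\pm1\}$, each solution $t\in[0,2L)$ corresponds bijectively to a sign pattern $\epsilon$ for which the system $t\equiv\epsilon_i v_i\pmod{2a_i}$ $(i\in[n])$ is solvable; by the generalized CRT this system is solvable iff it is pairwise solvable, and then its solution is unique modulo $\lcm(2a_i)=2L$. Hence $N$ equals the number of sign patterns $\epsilon$ satisfying, for all $i,j$, the constraint $\epsilon_i v_i\equiv\epsilon_j v_j\pmod{2\gcd(a_i,a_j)}$. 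Writing $\epsilon_i=(-1)^{x_i}$, I would then observe that each such pairwise constraint depends only on the parity $x_i+x_j$: it is equivalent to exactly one of ``$x_i+x_j=0$'', ``$x_i+x_j=1$'', ``no condition'', or ``infeasible''. Coordinates with $v_i\in\{0,a_i\}$ have a single branch and so contribute unary conditions or constants, which must be tracked but fit the same scheme; this is precisely the constraint satisfaction problem promised in the abstract.

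This would exhibit $N$ as the number of solutions of an affine system over $\mathbb{F}_2$. Such a system either has no solution, or its solution set is a coset of the kernel and hence has cardinality $2^k$ for some $k\ge0$; therefore $N\in\{0\}\cup\{2^k:k\ge0\}$. Combining this with the parity fact $N$ even coming from the reflection symmetry yields $N\in\{0\}\cup\{2^k:k\ge1\}$, so that $m_{\cT}(v)=N/2\in\{0\}\cup\{2^k:k\ge0\}$, as claimed. The main obstacle I anticipate is the careful bookkeeping around the boundary coordinates $v_i\in\{0,a_i\}$: there the branch sign degenerates, so the bijection between solutions $t$ and sign patterns must be set up to avoid double counting, and one must verify that the induced constraints remain $\mathbb{F}_2$-affine so that the power-of-two conclusion is preserved. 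Formalizing exactly this is, I expect, what the CSP reformulation is designed to accomplish.
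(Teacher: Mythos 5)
Your proposal is correct and follows essentially the same route as the paper: rescale to integer sides, unfold so the reflected path becomes the straight line read through the period-$2a_i$ folding maps, halve the count over a full period $[0,2\ell)$ via the symmetry $t\mapsto 2\ell-t$ (with the fixed points $0,\ell$ being corners), and convert crossings into sign patterns $\epsilon_i\in\{\pm1\}$ with $t\equiv\epsilon_i v_i \pmod{2a_i}$, counted through the generalized Chinese Remainder Theorem with pairwise constraints modulo $\gcd(2a_i,2a_j)$ --- this is exactly the paper's Lemma~\ref{connectionCSPc}, including the degenerate boundary coordinates $v_i\in\{0,a_i\}$ you flag, which the paper absorbs cleanly into the normalization $2^{|J(v)|+1}$ (such coordinates impose only trivial or infeasible pairwise constraints, so affinity is indeed preserved as you anticipated). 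The only local difference is the last counting step: you view the constraint system as affine over $\mathbb{F}_2$, so the solution set is empty or a coset of the kernel of size $2^k$, whereas the paper's Lemma~\ref{CSPcount} reaches the same conclusion combinatorially (connected components of an auxiliary graph, spanning-tree propagation, and the complement symmetry $g\mapsto 1-g$); these are two proofs of the identical lemma, with yours slightly slicker and the paper's more self-contained.
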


For our proof, we initially establish a discrete structure, so that we can analyze the problem by using algebraic and number theoretic methods. In the second step, we formalize the setting in the language of translation surfaces. Following this, we establish a bijection to the number of solutions of a specific boolean constraint satisfaction problem. Finally, we count those solutions.\par 
In the case of pairwise coprime side lengths, we determine the crossing number of a given $v \in \cT$ precisely. 

\begin{theo}
 \label{Thm:Main2} 
 Let $a_1,\ldots,a_n$ be pairwise coprime positive integers, and let $v\in \mathcal{T} \cap \NN^n$ not be a corner. If $v_i \equiv 0 \bmod 2$ for all $i \in [n]$ or $v_i \equiv 1 \bmod 2$ for all $i \in [n]$, then 
 \begin{align*}
 m_{\cT}(v)=2^{n-1-|\{i: \ v_i=0 \text{ or } v_i=a_i  \}| }.
 \end{align*}
 Otherwise,  $m_{\cT}(v)=0$. 
\end{theo}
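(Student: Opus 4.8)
The plan is to make the standard billiard unfolding explicit and then reduce the crossing count to a system of congruences. First I would record the position of the ball at time $t$ as $(\phi_1(t),\dots,\phi_n(t))$, where each $\phi_i\colon\RR\to[0,a_i]$ is the $2a_i$-periodic tent map with $\phi_i(t)=t$ on $[0,a_i]$ and $\phi_i(t)=2a_i-t$ on $[a_i,2a_i]$; this is the discrete folded description of the reflected trajectory. The ball halts at the first $t>0$ with $\phi_i(t)\in\{0,a_i\}$ for every $i$ simultaneously, i.e.\ with $t\equiv 0\pmod{a_i}$ for all $i$; since the $a_i$ are pairwise coprime this first time is $L:=\lcm(a_1,\dots,a_n)=a_1\cdots a_n$. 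Because each $\phi_i$ has integer slopes and $\phi_i(0)=0$, a coordinate is an integer exactly at integer times, so the trajectory meets the lattice point $v$ only at integer $t$; as $v$ is not a corner, the endpoints $t=0,L$ are excluded and $m_{\cT}(v)=\lvert\{t\in\ZZ\cap(0,L):\phi_i(t)=v_i\text{ for all }i\}\rvert$.

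Next I would translate each coordinate condition into a congruence. For fixed $i$, $\phi_i(t)=v_i$ holds precisely when $t\equiv v_i$ or $t\equiv -v_i\pmod{2a_i}$; let $S_i\subseteq\ZZ/2a_i\ZZ$ be this solution set. When $0<v_i<a_i$ the two residues $v_i$ and $2a_i-v_i$ are distinct, so $\lvert S_i\rvert=2$, while for a boundary value $v_i\in\{0,a_i\}$ they coincide and $\lvert S_i\rvert=1$. It is convenient to count over a full period: the symmetry $\phi_i(2L-t)=\phi_i(t)$ pairs each solution $t$ with $2L-t$ and has no fixed point (the only candidate $t=L$ is a corner), so writing $N:=\lvert\{t\in\ZZ/2L\ZZ:\ t\bmod 2a_i\in S_i\ \text{for all }i\}\rvert$ we get $m_{\cT}(v)=\tfrac12 N$.

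The core step is to evaluate $N$ by a Chinese-remainder argument adapted to the shared factor $2$ in the moduli $2a_i$. Since the $a_i$ are pairwise coprime, their odd parts are pairwise coprime and $2$ is the only prime common to several $2a_i$; concretely, $\lcm(2a_1,\dots,2a_n)=2L$, so the reduction map $\ZZ/2L\ZZ\to\prod_i\ZZ/2a_i\ZZ$ is injective, and a cardinality count (both the image and the set of constant-parity tuples have size $2L=2\prod_i a_i$) identifies the image with the tuples $(x_i)_i$ all of whose entries share a common parity. The key observation is that each $S_i$ lies in a single parity class, namely that of $v_i$ (the two interior residues $v_i$ and $2a_i-v_i$ differ by the even number $2a_i-2v_i$, and the boundary residues $0,\,a_i$ have the parity of $v_i$ as well). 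Hence a tuple $(x_i)\in\prod_iS_i$ lifts to some $t\in\ZZ/2L\ZZ$ iff all the $v_i$ share one parity, and when they do, \emph{every} choice lifts uniquely. Consequently $N=\prod_i\lvert S_i\rvert=2^{\,\lvert\{i:\,0<v_i<a_i\}\rvert}$ if the coordinates of $v$ have a common parity (equivalently, $v$ actually lies on the trajectory), and $N=0$ otherwise.

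Combining the two reductions yields the theorem: on the support, $m_{\cT}(v)=\tfrac12 N=2^{\,\lvert\{i:\,0<v_i<a_i\}\rvert-1}=2^{\,n-1-\lvert\{i:\,v_i=0\text{ or }v_i=a_i\}\rvert}$, while the alternative $N=0$ recovers the crossing number $0$ permitted by Theorem~\ref{Thm:Main}. The main obstacle is precisely the non-coprimality of the moduli $2a_i$: ordinary CRT does not apply, so one must pin down the image of the reduction map and hence the parity-consistency condition that governs whether $v$ is hit at all. A secondary point demanding care is the bookkeeping around the factor $\tfrac12$ coming from the out-and-back symmetry of the closed path together with the exclusion of the corners $t=0,L$; it is this factor, rather than the parity condition, that produces the $-1$ in the exponent.
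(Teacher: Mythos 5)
Your proposal is correct, but it takes a genuinely different route from the paper's proof of \Cref{Thm:Main2}. The paper argues globally and indirectly: it proves the pointwise upper bound $m_{\cT}(v)\leq 2^{n-1-|I(v)|}$ by counting admissible directions at $v$ (\Cref{mvup}), counts the candidate points on each $k$-dimensional border via the parity condition of \Cref{lem:Condition} (\Cref{bk}), and then compares $\sum_v m_{\cT}(v)$ with the trajectory length $\ell=\lcm(a_1,\ldots,a_n)=\prod_i a_i$; the chain $\ell\leq \sum_k b_k 2^{n-1-k}=\ell$ in \eqref{laenge2} forces equality everywhere, so no individual crossing number is ever computed directly. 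You instead evaluate $m_{\cT}(v)$ point by point: unfold via the tent maps, reduce to counting integer $t$ with $t\equiv \pm v_i \pmod{2a_i}$, halve via the $t\mapsto 2L-t$ symmetry, and identify the image of $\ZZ/2L\ZZ\to\prod_i \ZZ/2a_i\ZZ$ with the constant-parity tuples by a cardinality count (this is exactly where pairwise coprimality enters, since $\gcd(2a_i,2a_j)=2$). In effect, your argument is the coprime specialization of the machinery the paper develops only later for \Cref{Thm:Main}: your congruence system and factor-$\tfrac12$ bookkeeping are precisely \eqref{system} and \eqref{count} inside \Cref{connectionCSPc}, with your parity analysis replacing the CSP count of \Cref{CSPcount} (all constraints $\phi_v(ij)$ become parity constraints when the $a_i$ are coprime). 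What each approach buys: the paper's extremal double count is shorter and sidesteps any analysis of the non-coprime moduli $2a_i$, but equality in \eqref{laenge2} pins down $m_{\cT}(v)$ only for the points with $m_{\cT}(v)>0$ counted by $b_k$; your direct computation handles every lattice point and makes the parity obstruction explicit. Indeed, your $N=0$ branch quietly corrects the statement as printed: for mixed-parity non-corner points, e.g.\ $v=(1,2)$ in $[0,3]\times[0,5]$, one has $m_{\cT}(v)=0$ rather than $2^{n-1-|I(v)|}$, so the displayed formula holds exactly for those $v$ whose coordinates share a common parity, equivalently for the points the trajectory actually visits --- a caveat the theorem's wording elides and the paper's own proof implicitly assumes.
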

In the context of arithmetic billiards, the number of self-intersecting points in the two-dimensional case has been explored by Perucca, Reguengo De Sousa and Tronto~\cite{ABilliards}, as well as by Da Veiga Bruno~\cite{BilliardNdim}. Another counting question asks for the number of bouncing points, i.e. the number of points in which the trajectory meets the boundary of the domain, this has been studied by Steinhaus \cite{Steinhaus}, Gardner \cite{Gardner} and also in the multi-dimensional case by Li~\cite{Yangcheng}.\par 

The paper is organized as follows. In the first step, in \Cref{sec:DiscreteStructure}, we discretize the problem. As a warm-up, we will present an elementary proof of Theorem~\ref{Thm:Main2} in Section~\ref{sec:Warmup}. In Section~\ref{sec:TransSur} we introduce translation surfaces. Finally, in Section~\ref{sec:mainProof}, we present the proof of our main result, Theorem~\ref{Thm:Main}.

\section{Discretization of the billiard table}\label{sec:DiscreteStructure}
For our further reasoning, we rely on a discretization of the problem. Our first observation is that scaling the billiard table $\cT$ does not change the form of the billiard curve. Since the side lengths $a_i$ are pairwise commensurable, after scaling, we can assume that $a_i\in \NN$ for all $i\in [n]$. Throughout this paper, from now on, all $a_i$ will be assumed to be integers.\par\noindent
Observe that the set of \emph{intersection points}, i.e. the points satisfying $m_{\mathcal{T}}(v)\geq 2$, is a subset of $\NN^n \cap \cT$. This means, it suffices to prove \Cref{Thm:Main} and \Cref{Thm:Main2} for $v\in \NN^n \cap \cT$. This is crucial for our argumentation since we will heavily rely on number theoretic machinery. The following lemma which follows by a result from Li~\cite{Yangcheng} gives a necessary condition for a point $v\in \NN^n \cap \cT$ to be on the billiard curve. We include a proof for completeness. 
\begin{lemma}[Theorem 4.3 in Li~\cite{Yangcheng}]
\label{lem:Condition}
    Let $v \in \cT \cap \NN^n$ with $m_{\cT}(v)>0$. Then $v_i \equiv 0 \bmod 2$ for all $i \in [n]$ or $v_i \equiv 1 \bmod 2$ for all $i \in [n]$.
    \begin{proof}
       The starting point $(0,\ldots,0)$ satisfies $v_i \equiv 0 \bmod 2$ trivially. The current direction $d$ of the trajectory is always of the form $d \in \{-1,1\}^n$. For each $v \in \cT \cap \NN^n$ with $v + d \in \cT$ and $v_i \equiv v_j \bmod 2$ for all $i,j\in [n]$, we have $v + d \in \cT \cap \NN^n$ and $v_i+d_i \equiv v_j+d_j \bmod 2$ since the parity in all entries is changed. The claim follows inductively.
    \end{proof}
\end{lemma}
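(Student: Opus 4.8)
The plan is to prove this by induction along the trajectory, tracking a parity invariant. The key observation is that the billiard curve, viewed on the integer lattice $\NN^n \cap \cT$, moves one unit step at a time in each coordinate, so every step changes the parity of \emph{all} coordinates simultaneously. Thus the quantity I want to show is invariant is the statement that all coordinates share a common parity, i.e. $v_i \equiv v_j \bmod 2$ for all $i,j \in [n]$.

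First I would fix the base case: the trajectory starts at the origin $(0,\ldots,0)$, which trivially satisfies $v_i \equiv 0 \bmod 2$ for all $i$, so all coordinates have the same parity there. Next I would argue that a single lattice step preserves this property. Since the current direction is always of the form $d \in \{-1,1\}^n$, advancing from an integer point $v$ to the next integer point along the trajectory replaces $v_i$ by $v_i + d_i$ for each $i$; because each $d_i$ is odd ($\pm 1$), each coordinate flips its parity. If all the $v_i$ agreed in parity before the step, then all the $v_i + d_i$ still agree in parity afterward, since every coordinate is toggled. I would also note that reflections off the boundary do not break this argument: a reflection simply negates one component of the direction vector, keeping $d \in \{-1,1\}^n$, and occurs precisely when a coordinate reaches $0$ or $a_i$, so the invariant is maintained through reflections as well.

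The conclusion then follows by induction: every integer point $v$ that lies on the billiard curve is reachable from the origin by a finite sequence of unit steps, each preserving the common-parity property, so $v$ must satisfy $v_i \equiv v_j \bmod 2$ for all $i,j$. Since the origin has all even coordinates, any point on the curve is therefore either all-even or all-odd, which is exactly the claim that $v_i \equiv 0 \bmod 2$ for all $i$ or $v_i \equiv 1 \bmod 2$ for all $i$. Finally, any point $v$ with $m_{\cT}(v) > 0$ lies on the curve and (being an intersection-relevant point) belongs to $\NN^n \cap \cT$, so the condition applies to it.

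I expect the only genuine subtlety to be making the inductive step fully rigorous at reflection points, where one must confirm that the trajectory still passes through integer lattice points and that the sign flip in $d$ does not disturb the parity argument. This is straightforward since reflecting negates a $\pm 1$ entry to another $\pm 1$ entry, so $d$ never leaves $\{-1,1\}^n$ and each step continues to toggle all parities uniformly.
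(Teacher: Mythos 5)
Your proposal is correct and follows essentially the same argument as the paper: induction along the trajectory from the origin, using that the direction always lies in $\{-1,1\}^n$, so each unit lattice step flips the parity of every coordinate simultaneously and the common-parity invariant is preserved (including through reflections, which the paper handles implicitly by noting $d$ never leaves $\{-1,1\}^n$). Your added remark that bounce points are themselves integer lattice points, so the induction passes through reflections cleanly, is a minor elaboration of the paper's proof rather than a different route.
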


\noindent The following lemma determines the length of the trajectory, i.e. the sum of the lengths of the linear segments of the trajectory. Recall that our trajectory starts in the origin and stops in a corner.

\begin{lemma}[see e.g. Theorem 4.8 in Li~\cite{Yangcheng}]
\label{lem:LengthTrajectory}
    Let $a_i \in \NN$ for all $i\in [n]$. Then the trajectory has a length of $\sqrt{n} \ell$, where $\ell = \lcm(a_1,\ldots,a_n)$.
    \begin{proof}
    Whenever the initial segment of the trajectory has length a multiple of $a_i \sqrt{n}$, the current point has $a_i$ or $0$ as the $i$-th coordinate. Since every corner $p$ satisfies $p_i\equiv 0 \bmod a_i$ for every $i\in [n]$, the length of the trajectory is $\ell \sqrt{n}$. 
    \end{proof}
\end{lemma}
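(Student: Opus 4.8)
The plan is to use the classical \emph{unfolding} (reflection) trick for billiards in a box, which turns the reflected trajectory into a straight line. Since the launch direction is $d=(1,\ldots,1)$, I would parametrize the trajectory by a single scalar $s\ge 0$ so that the \emph{unfolded} straight-line trajectory sits at the point $(s,s,\ldots,s)\in\RR^n$, and the actual position inside $\cT$ is recovered coordinatewise by folding: the $i$-th coordinate is $x_i(s)=\phi_{a_i}(s)$, where $\phi_{a_i}$ is the triangle wave of period $2a_i$ (that is, write $s=2a_i q+r$ with $0\le r<2a_i$ and set $\phi_{a_i}(s)=r$ if $r\le a_i$ and $\phi_{a_i}(s)=2a_i-r$ otherwise). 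The point of this normalization is that every coordinate advances at unit rate in the parameter $s$, and each reflection off the wall $x_i=0$ or $x_i=a_i$ is exactly a fold of $\phi_{a_i}$.

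The key elementary observation, matching the wording in the statement, is that $x_i(s)\in\{0,a_i\}$ if and only if $a_i\mid s$: the tent map $\phi_{a_i}$ attains an endpoint of $[0,a_i]$ precisely at integer multiples of $a_i$. A corner of $\cT$ is a point all of whose coordinates simultaneously lie in $\{0,a_i\}$, so the trajectory halts at the \emph{least} positive $s$ that is divisible by every $a_i$; by definition this is $s=\ell=\lcm(a_1,\ldots,a_n)$. I would note explicitly that no corner can be reached earlier, since any earlier stopping value of $s$ would have to be a positive common multiple of all the $a_i$ smaller than $\ell$, which is impossible, and that the ball genuinely does reach a corner at $s=\ell$ since $\ell$ is divisible by each $a_i$.

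Finally, for the length: each folding map $\phi_{a_i}$ is piecewise an isometry (it is $1$-Lipschitz and locally of the form $s\mapsto \pm s+\text{const}$ away from the finitely many reflection instants), so the folded path has the same arc length as the unfolded one. The unfolded path from $(0,\ldots,0)$ to $(\ell,\ldots,\ell)$ is a straight diagonal segment of length $\sqrt{n}\,\ell$. Hence the billiard trajectory has length $\sqrt{n}\,\ell$, as claimed. The only genuinely delicate point is the bookkeeping that the folding map faithfully reproduces the reflected billiard dynamics (reflections $\leftrightarrow$ folds) and that the first corner occurs exactly at $s=\ell$; both are standard and reduce entirely to the divisibility criterion $x_i(s)\in\{0,a_i\}\iff a_i\mid s$.
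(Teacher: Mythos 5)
Your proposal is correct and follows essentially the same route as the paper: the paper's one-line observation that the trajectory's $i$-th coordinate lies in $\{0,a_i\}$ exactly when the arc length is a multiple of $a_i\sqrt{n}$ is precisely your divisibility criterion $x_i(s)\in\{0,a_i\}\iff a_i\mid s$, and both arguments conclude by identifying corners with common multiples of the $a_i$, the first of which is $\ell$. Your explicit tent-map unfolding merely supplies the justification the paper leaves implicit here (and in fact develops separately in \Cref{sec:TransSur} as the translation-surface construction).
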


\noindent An advantage of the discretization of the billiard table is that we can use the Chinese Remainder Theorem.
\begin{theo}[Chinese Remainder Theorem]\label{Thm:ChineseRemainder}
    Let $a_1, \ldots ,a_n$ be integers greater than $1$ and $\ell=\lcm(a_1,  a_2, \ldots,a_n)$. Further, let $v_1, \ldots , v_n$ be integers such that $0 \leq v_i < a_i$ for every $i \in [n]$. Iff for all $i\neq j$ we have $v_i\equiv v_j \bmod gcd(a_i,a_j)$, then there is a unique integer $x$, such that $0 \leq x < \ell$ and $x\equiv v_i \bmod a_i$. 
\end{theo}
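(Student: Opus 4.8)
The plan is to prove the three component claims separately: the necessity of the compatibility conditions, their sufficiency for existence, and uniqueness. The first and third are routine, so the real content lies in sufficiency.

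For necessity, suppose an integer $x$ satisfies $x \equiv v_i \bmod a_i$ for every $i$. Then for any pair $i \neq j$, reducing both congruences modulo $\gcd(a_i,a_j)$ --- which divides both $a_i$ and $a_j$ --- gives $x \equiv v_i$ and $x \equiv v_j \bmod \gcd(a_i,a_j)$, whence $v_i \equiv v_j \bmod \gcd(a_i,a_j)$. For uniqueness, if $x$ and $x'$ both solve the system then $a_i \mid (x-x')$ for every $i$, so their least common multiple $\ell$ divides $x-x'$; the constraint $0 \le x, x' < \ell$ then forces $x = x'$.

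The substantive step is sufficiency: assuming $v_i \equiv v_j \bmod \gcd(a_i,a_j)$ for all $i \ne j$, I would construct a solution prime by prime. Fix a prime $p$ dividing $\ell$ and let $e_i = \nu_p(a_i)$ denote the exponent of $p$ in $a_i$, so that $\nu_p(\ell) = \max_i e_i =: E$, attained at some index $i^\star$. Since $e_i \le E = e_{i^\star}$ for every $i$, the power of $p$ dividing $\gcd(a_i,a_{i^\star})$ is exactly $p^{e_i}$, so the compatibility condition $v_i \equiv v_{i^\star} \bmod \gcd(a_i,a_{i^\star})$ forces $v_i \equiv v_{i^\star} \bmod p^{e_i}$. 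Hence imposing $x \equiv v_{i^\star} \bmod p^{E}$ is consistent with every individual constraint $x \equiv v_i \bmod p^{e_i}$ at the prime $p$. Carrying this out independently for each prime dividing $\ell$ produces a system of congruences to pairwise coprime prime-power moduli $p^{E_p}$ whose product is $\ell$; the classical coprime Chinese Remainder Theorem then assembles a unique $x$ modulo $\ell$, which by construction satisfies $x \equiv v_i \bmod p^{\nu_p(a_i)}$ for every prime $p$ and index $i$, i.e.\ $x \equiv v_i \bmod a_i$.

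The only real subtlety is the bookkeeping across primes: one must check that the per-prime choices are mutually compatible and that recombining them reconstructs precisely the moduli $a_i$. This reduces to the factorizations $a_i = \prod_p p^{\nu_p(a_i)}$ together with $\nu_p(\ell) = \max_i \nu_p(a_i)$, so beyond coprime CRT the only lemma needed is that $p^{\min(e_i,e_{i^\star})}$ divides $\gcd(a_i,a_{i^\star})$, which is immediate from unique factorization. An alternative route avoids prime factorization via induction on $n$, merging two moduli at a time and invoking the distributive identity $\gcd\big(\lcm(a_1,\ldots,a_{n-1}),\, a_n\big) = \lcm_{i<n} \gcd(a_i,a_n)$ to verify compatibility at each step; this keeps the arithmetic elementary but relocates the difficulty to that lattice identity, so I would favor the prime-by-prime argument above.
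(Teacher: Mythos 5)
Your proposal is correct and complete. One point of comparison worth noting up front: the paper does not prove this statement at all --- it quotes the generalized (non-coprime) Chinese Remainder Theorem as a classical tool, to be invoked later in the proof of Lemma~5.1, so there is no in-paper argument to measure yours against. What you have written is the standard textbook proof of the non-coprime CRT, and all three components check out. Necessity and uniqueness are handled exactly as one should (reduction modulo $\gcd(a_i,a_j)$, and $\ell \mid x - x'$ together with the range restriction $0 \le x, x' < \ell$). Your sufficiency argument is sound, including the one genuinely delicate point: since $e_i \le E = e_{i^\star}$, you get $\nu_p\bigl(\gcd(a_i, a_{i^\star})\bigr) = \min(e_i, e_{i^\star}) = e_i$, so the pairwise compatibility hypothesis really does localize to $v_i \equiv v_{i^\star} \bmod p^{e_i}$, and imposing $x \equiv v_{i^\star} \bmod p^E$ is consistent with every constraint at $p$. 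Note that your reference index $i^\star$ depends on the prime $p$, which is harmless exactly because the per-prime conditions $x \equiv v_{i^\star(p)} \bmod p^{E_p}$ involve pairwise coprime moduli and are assembled independently by classical CRT; you flag this bookkeeping issue yourself and resolve it correctly via $a_i = \prod_p p^{\nu_p(a_i)}$. Two cosmetic remarks: the hypothesis $0 \le v_i < a_i$ plays no role in your argument beyond fixing representatives (the statement is true for arbitrary integers $v_i$ with uniqueness modulo $\ell$), and you rightly read the paper's slightly awkward ``Iff \ldots then'' phrasing as a biconditional, proving both directions. Your suggested inductive alternative via $\gcd\bigl(\lcm(a_1,\ldots,a_{n-1}), a_n\bigr) = \lcm_{i<n}\gcd(a_i,a_n)$ would also work, and your judgment that it merely relocates the difficulty into that lattice identity is accurate; the prime-by-prime route is the cleaner choice here.
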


\section{The coprime case: Proof of Theorem~\ref{Thm:Main2}}\label{sec:Warmup}

Let $a_1,a_2,\ldots,a_n$ be pairwise coprime integers.
Given $v\in \cT\cap \mathbb{N}^n$, define 
$$I_1=\{i\in [n]: v_i=0\}, \quad I_2=\{i\in [n]: v_i=a_i\} \quad \text{and} \quad I(v)=I_1\cup I_2.$$ 
First, we present an upper bound on $m_{\cT}(v)$ depending on $|I(v)|$.
\begin{lemma}
\label{mvup}
Let $v\in \cT\cap \mathbb{N}^n$ be a non-corner. Then  $m_{\cT}(v)\leq 2^{n-1-|I(v)|}$.
\end{lemma}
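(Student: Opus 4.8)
The plan is to unfold the reflections so that the billiard trajectory becomes the image of the straight ray $t\mapsto t\cdot(1,\ldots,1)$ under coordinatewise folding. Concretely, after the rescaling to integer side lengths, the $i$-th coordinate of the trajectory at parameter $t\in[0,\ell]$ (with $\ell=\lcm(a_1,\dots,a_n)$, as in Lemma~\ref{lem:LengthTrajectory}) is the even, $2a_i$-periodic tent function $g_i$ with $g_i(t)=v_i$ holding exactly when $t\equiv v_i$ or $t\equiv -v_i \pmod{2a_i}$. Thus $m_{\cT}(v)$ equals the number of $t\in(0,\ell)$ with $g_i(t)=v_i$ for all $i$; the endpoints are excluded precisely because $v$ is not a corner, so the origin ($t=0$) and the terminal corner ($t=\ell$) are not equal to $v$.

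First I would pass to the doubled interval $[0,2\ell)$, extending the folding functions $g_i$ past $t=\ell$ as a purely formal counting device. Since each $g_i$ is even and $2\ell$-periodic, we have $g_i(2\ell-t)=g_i(t)$ for every $i$, so the involution $t\mapsto 2\ell-t$ sends solutions to solutions and restricts to a bijection between the solutions in $(0,\ell)$ and those in $(\ell,2\ell)$; its only fixed points $t=0,\ell$ contribute nothing because $v$ is a non-corner. Writing $M$ for the number of solutions in $[0,2\ell)$, this yields $M=2\,m_{\cT}(v)$, so it suffices to prove $M\le 2^{n-|I(v)|}$.

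To bound $M$, I would attach to each solution $t$ a sign vector $\epsilon(t)\in\{\pm1\}^{[n]\setminus I(v)}$ recording, for each interior coordinate $i\notin I(v)$ (where $0<v_i<a_i$, hence $v_i\not\equiv -v_i\pmod{2a_i}$), whether $t\equiv v_i$ or $t\equiv -v_i\pmod{2a_i}$. For a boundary coordinate $i\in I(v)$ the two residues collapse into one ($0$ when $v_i=0$, and $a_i$ when $v_i=a_i$, since $-a_i\equiv a_i$), so such coordinates carry no choice; this collapse is exactly where the saving over the crude bound $2^{n-1}$ originates. The decisive point is that $t\mapsto\epsilon(t)$ is injective: two solutions with the same sign vector satisfy $t\equiv t'\pmod{2a_i}$ for every $i$ — on interior coordinates by the matching signs, on boundary coordinates automatically — hence $t\equiv t'\pmod{\lcm(2a_1,\dots,2a_n)}$, and since $\lcm(2a_1,\dots,2a_n)=2\ell$ while both lie in $[0,2\ell)$, they coincide. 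Therefore $M$ is at most the number of available sign vectors, namely $2^{n-|I(v)|}$, and dividing by two gives $m_{\cT}(v)\le 2^{n-1-|I(v)|}$.

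The main obstacle I expect is the bookkeeping in the last step: checking that boundary coordinates genuinely merge the two residue classes (so $I(v)$ lowers the exponent correctly) and confirming the uniqueness claim, i.e. that a system $t\equiv r_i\pmod{2a_i}$ has at most one solution modulo $\lcm(2a_i)=2\ell$. Both rest on the Chinese Remainder Theorem (Theorem~\ref{Thm:ChineseRemainder}) together with Lemma~\ref{lem:Condition}, which forces all $v_i$ to share a common parity; in the pairwise coprime case $\gcd(2a_i,2a_j)=2$, so this parity makes every chosen sign vector automatically consistent, and it is precisely this consistency that will upgrade the present inequality to the exact count of Theorem~\ref{Thm:Main2}.
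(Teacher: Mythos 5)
Your proof is correct, but it takes a genuinely different route from the paper's own proof of Lemma~\ref{mvup}. The paper argues directly in $\cT$ in two lines: it counts the directions $w\in\{-1,1\}^n$ pointing from $v$ into the board --- there are exactly $2^{n-|I(v)|}$ of them, since $w_i$ is forced on the $|I(v)|$ boundary coordinates --- and observes that each crossing consumes two such directions (the outgoing direction and the reversed incoming one), giving $m_{\cT}(v)\le |W_v|/2 = 2^{n-1-|I(v)|}$. You instead unfold the billiard: parametrizing the trajectory by the tent functions $g_i$, pairing $t\leftrightarrow 2\ell-t$ on the doubled interval (your check that the fixed points $t=0,\ell$ are not solutions precisely because $v$ is a non-corner is exactly right), and injecting solutions into sign vectors via uniqueness of residues modulo $\lcm(2a_1,\ldots,2a_n)=2\ell$. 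This is, in substance, the machinery the paper only develops later: your doubling step and your sign-vector encoding reappear almost verbatim in the proof of Lemma~\ref{connectionCSPc} (note that $J(v)=I(v)$ for $v\in\cT\cap\NN^n$), and your bound is what that lemma yields when combined with the trivial estimate $|A(G)|\le 2^{n}$ on satisfying assignments. What the paper's short argument buys is brevity, though it leaves implicit that no direction can serve two distinct crossings (this follows because the flow through a point in a fixed direction is deterministic in both time directions and the trajectory is not periodic); your version makes the corresponding injectivity explicit and self-contained, at the cost of length. Two small inaccuracies in your closing remarks, neither of which is a gap: the uniqueness step needs only the easy divisibility direction ($2a_i\mid t-t'$ for all $i$ implies $2\ell\mid t-t'$), not the full strength of \Cref{Thm:ChineseRemainder}, and \Cref{lem:Condition} plays no role in this lemma --- it becomes relevant, as you anticipate, only for upgrading the inequality to the exact count of \Cref{Thm:Main2}.
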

\begin{proof}
Let $W_v \subseteq \{-1,1\}^n$ be the set of directions which from $v$ point towards the board, i.e. the set of vectors $w\in \{-1,1\}^n$ such that $v+\varepsilon w\in \cT$ for some $\varepsilon>0$. Note that every $w\in W_v$ satisfies $w_i=1$ if $i\in I_1$ and $w_i=-1$ if $i\in I_2$. Thus, $|W_v|=2^{n-|I(v)|}$.
Since the trajectory enters and leaves the point $v$, we conclude $m_\cT(v)\leq |W_v|/2\leq 2^{n-|I(v)|-1} $ for every $v\in \cT\cap \mathbb{N}^n$.
\end{proof}
Recall that, by definition, $m_\cT(v)=1$ for $v$ being the start corner or the end corner. For all other corners $v\in \cT$, we have $m_\cT(v)=0$. A point $v\in \cT \cap \NN^n$ is called \emph{reachable} if
$v_i \equiv 0 \bmod 2$ for all $i \in [n]$ or $v_i \equiv 1 \bmod 2$ for all $i \in [n]$. Set $b_k$ to be the number of reachable points $v\in \cT \cap \NN^n$ such that $|I(v)|=k$. Observe that the contraposition of Lemma~\ref{lem:Condition} states that if $v$ is not reachable, then by $m_{\cT}(v)=0$.

\begin{lemma}
\label{bk}
Let $0\leq k\leq n$. Then
\begin{align*}
b_k=2^{1-n+k} \sum_{\substack{J\subseteq[n]: \\ |J|=k }}  \ \prod_{\substack{j\in [n]\setminus J }} (a_j-1).
\end{align*}
\end{lemma}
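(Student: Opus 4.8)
The plan is to first pin down exactly which lattice points carry a positive crossing number, and then to count those points by a parity argument that exploits pairwise coprimality.

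\textbf{Step 1 (characterization of visited points).} I would show that for $v\in\cT\cap\NN^n$ one has $m_\cT(v)>0$ if and only if all coordinates $v_i$ share the same parity. Necessity is exactly \Cref{lem:Condition}. For sufficiency I would unfold the trajectory: writing the position at parameter $t\in[0,\ell]$ as $(f_{a_1}(t),\dots,f_{a_n}(t))$, where $f_a$ is the $2a$-periodic tent (reflection) map with $f_a(s)=s$ on $[0,a]$, the point $v$ is visited precisely when $f_{a_i}(t)=v_i$ for all $i$ and some $t\in[0,\ell]$. Trying $t\equiv v_i \pmod{2a_i}$, this system is consistent on each pairwise $\gcd(2a_i,2a_j)=2$ exactly because all $v_i$ agree modulo $2$; by the Chinese Remainder Theorem there is a unique solution $t_0$ modulo $\lcm(2a_i)=2\ell$, and either $t_0$ or its reflection $2\ell-t_0$ lies in $[0,\ell]$ and hits $v$, since $f_{a_i}$ is even and $2\ell$-periodic. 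Hence $m_\cT(v)>0$.

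\textbf{Step 2 (reduction to a counting problem).} By Step 1, $b_k$ equals the number of $v\in\cT\cap\NN^n$ whose extreme set $I(v)$ has size $k$ and whose coordinates all have equal parity. Grouping by the precise extreme set $J=I(v)$, I would write $b_k=\sum_{|J|=k}N_J$, where $N_J$ counts the equal-parity $v$ with $I(v)=J$, i.e.\ $v_j\in\{0,a_j\}$ for $j\in J$ and $v_j\in\{1,\dots,a_j-1\}$ for $j\notin J$.

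\textbf{Step 3 (the parity split).} For a fixed $J$ let $c_i$ denote the number of admissible values of $v_i$ (so $c_i=2$ for $i\in J$ and $c_i=a_i-1$ for $i\notin J$), and let $e_i,o_i$ be the numbers of these that are even resp.\ odd. Since ``all coordinates equal parity'' splits into ``all even'' and ``all odd'', $N_J=\prod_i e_i+\prod_i o_i$. The crucial observation is that $e_i=o_i=c_i/2$ whenever $a_i$ is odd (both for $i\in J$, where $\{0,a_i\}$ splits as one even and one odd value, and for $i\notin J$, where $\{1,\dots,a_i-1\}$ has equally many of each parity). Pairwise coprimality forces at most one $a_i$ to be even, so $e_i=o_i$ for all but at most one index; factoring that index out gives
\[
N_J=2^{1-n}\prod_i c_i=2^{1-n}\,2^{|J|}\prod_{j\in[n]\setminus J}(a_j-1)=2^{1-n+k}\prod_{j\in[n]\setminus J}(a_j-1).
\]
Summing over all $J$ with $|J|=k$ then yields the claimed formula.

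The main obstacle is Step 3: obtaining the clean factor $2^{1-n}$ amounts to showing that exactly a $2^{1-n}$-fraction of the $\prod_i c_i$ candidate points is parity-monochromatic, and this is false in general — it relies essentially on pairwise coprimality guaranteeing at most one even side length, since two even lengths would unbalance the per-coordinate parity counts and break the identity. A secondary point requiring care is the sufficiency direction of Step 1, namely producing an honest parameter $t\in[0,\ell]$ (rather than merely a residue modulo $2\ell$) via the reflection symmetry, and checking that the boundary values $k=n$ (the two corner visits) are correctly accounted for.
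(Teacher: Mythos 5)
Your proposal is correct, and it differs from the paper's own proof in two substantive ways, both arguably improvements in rigor at the cost of heavier machinery. First, your Step 1 sufficiency direction is something the paper's proof of this lemma never establishes: the paper takes $v$ with $m_\cT(v)>0$, applies \Cref{lem:Condition} to constrain the parities, counts the admissible points, and silently sets $b_k$ equal to that count. Strictly read, this only proves $b_k \leq 2^{1-n+k}\sum_{|J|=k}\prod_{j\notin J}(a_j-1)$; equality (i.e., that every parity-monochromatic point is actually visited) is justified in the paper only a posteriori, by the forced equality in the chain \eqref{laenge2}, or alternatively by the CRT machinery of \Cref{connectionCSPc}, which appears only later. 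Your unfolding argument --- solving $t\equiv v_i \pmod{2a_i}$ for all $i$, with pairwise consistency holding because $\gcd(2a_i,2a_j)=2\gcd(a_i,a_j)=2$ and all $v_i$ share a parity, then using evenness and $2\ell$-periodicity of the tent maps to replace $t_0$ by $2\ell-t_0$ so as to land in $[0,\ell]$ --- proves sufficiency directly and makes the lemma self-contained; it is essentially the $g\equiv 0$ special case of \Cref{connectionCSPc}. Second, where the paper splits into two cases (all $a_i$ odd; exactly one even $a_i$, with a further sub-split on whether $1\in J$), your factorization $N_J=\prod_i e_i+\prod_i o_i$ with $e_i=o_i=c_i/2$ at all but at most one index handles everything uniformly: for even $a_1$ one checks $(e_1,o_1)=(2,0)$ if $1\in J$ and $(e_1,o_1)=(a_1/2-1,\,a_1/2)$ if $1\notin J$, so in all cases $N_J=(e_1+o_1)\,2^{-(n-1)}\prod_{i\neq 1}c_i=2^{1-n}\prod_i c_i$, recovering the paper's two-case computation in one line. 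Your boundary check at $k=n$ also goes through: the only parity-monochromatic corners are the start and end corners (e.g., for even $a_1$ these are the origin and $(a_1,0,\ldots,0)$), which carry $m_\cT=1$ by the paper's convention, giving $b_n=2$ in agreement with the formula. The trade-off is that the paper deliberately keeps Section 3 elementary, letting the global length identity close the sufficiency gap for free, whereas you import the translation-surface/CRT viewpoint early to get a locally complete proof.
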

\begin{proof}
We distinguish two cases.

\noindent
\textbf{Case 1:} $2 \nmid a_i$ for every $i\in [n]$.

Let $v \in \cT \cap \NN^n$ such that $|I(v)|=k$ and either $v_i \equiv 0 \bmod 2$ for all $i\in [n]$ or $v_i \equiv 1 \bmod 2$ for all $i\in [n]$. Because $|I(v)|=k$ there are exactly $k$ coordinates of $v$ which are either $0$ or $a_i$. Further, since $a_i \equiv 1 \bmod 2$ for all $i \in [n]$, these $k$ coordinates are simultaneously either $0$ or $a_i$. The $n-k$ coordinates, which are not on the border, must have the same parity as the coordinates on the border. Therefore, they can obtain precisely $\frac{a_i-1}{2}$ possible values. This yields, for $0\leq k \leq n$,
\begin{align*}
b_k=\sum_{\substack{J\subseteq[n]: \\ |J|=k }} \ 2 \ \prod_{\substack{j\in [n]\setminus J }} \frac{a_j-1}{2}=2^{1-n+k} \sum_{\substack{J\subseteq[n]: \\ |J|=k }}  \ \prod_{\substack{j\in [n]\setminus J }} (a_j-1).
\end{align*}
\textbf{Case 2:} $2 \mid a_i$ for some $i\in [n]$.

Note that there is at most one even $a_i$ since the $a_i$ are assumed to be pairwise coprime. Without loss of generality, we can assume $2\mid a_1$. We count the number of reachable $v \in \cT \cap \NN^n$ such that $|I(v)|=k$ by looking at the cases $v_1\in \{0, a_1\}$ and $v_1\notin \{0, a_1\}$ separately. 

If the first coordinate is on the border, i.e. $v_1\in \{0, a_1\}$, then $v_i\equiv 0 \bmod 2$ for all $i\in[n]$. Thus, $v_j=0$ for $j\in I(v)$. There are two choices for $v_1$. The coordinates not on the border must be even, and therefore can obtain $\frac{a_j-1}{2}$ possible values. We conclude that the number of reachable $v \in \cT \cap \NN^n$ such that $|I(v)|=k$ and $v_1\in \{0, a_1\}$ is  
\begin{align}
\label{eqcount1}
\sum_{\substack{J\subseteq[n]: \\ |J|=k,\ 1\in J }} \left(\ 2  \ \prod_{j\in [n]\setminus J} \frac{a_j-1}{2} \right).
\end{align}
If the first coordinate is not on the border, i.e. $v_1\notin \{0, a_1\}$, there are $a_1-1$ choices for $v_1$. Since $v$ is reachable, this choice determines the coordinates $v_j$ for $j\in I(v)$ uniquely, and also the parity of $v_j$ for $j\notin I(v)$. There are $\frac{a_j-1}{2}$ possible choices for non-boundary coordinates. Thus, the number of reachable $v \in \cT \cap \NN^n$ such that $|I(v)|=k$ and $v_1\notin \{0, a_1\}$ is  
\begin{align}
\label{eqcount2}
\sum_{\substack{J\subseteq[n]: \\ |J|=k,\ 1\not\in J }}  \left(\   (a_1-1)\ \prod_{\substack{j\in [n]\setminus J \\ j\neq 1 }}  \frac{a_j-1}{2} \right).
\end{align}
By adding up \eqref{eqcount1} and \eqref{eqcount2} we obtain, for  $0\leq k \leq n$,
\begin{align*}
b_k&=\sum_{\substack{J\subseteq[n]: \\ |J|=k,\ 1\in J }} \left(\ 2  \ \prod_{j\in [n]\setminus J} \frac{a_j-1}{2} \right)+\sum_{\substack{J\subseteq[n]: \\ |J|=k,\ 1\not\in J }}  \left(\   (a_1-1)\ \prod_{\substack{j\in [n]\setminus J \\ j\neq 1 }}  \frac{a_j-1}{2} \right)\\
&=2^{1-n+k} \sum_{\substack{J\subseteq[n]: \\ |J|=k }}  \ \prod_{\substack{j\in [n]\setminus J }} (a_j-1),
\end{align*}
completing the proof of this lemma.
\end{proof} 

\begin{proof}[Proof of Theorem~\ref{Thm:Main2}]
The length of the trajectory is $\sqrt{n} \ell$ by \Cref{lem:LengthTrajectory}, with $\ell=\lcm(a_1,\ldots,a_n)$. Therefore, the trajectory passes through $\ell$ points from $\cT \cap \NN^n$, when we count with multiplicities but without the starting point. The length $\ell$ can be expressed as the sum over the crossed points in $\cT \cap \NN^n$ times their multiplicity, i.e.
\begin{align}
\label{laenge1}
\ell&=\sum_{v\in \cT \cap \NN^n}m_{\cT}(v).
\end{align}
Further, by partitioning the points by the dimension of the border they are lying on and by Lemma~\ref{lem:Condition}, 
\begin{align}
\label{laenge}
\ell&=\sum_{v\in \cT \cap \NN^n}m_{\cT}(v)=\sum_{k=0}^{n}\ \sum_{\substack{v\in \cT\cap \NN^n:\\  |I(v)|=k\\ v \text{ is reachable}}}m_{\cT}(v).
\end{align}
By combining Lemmas~\ref{mvup}, \ref{bk} with  \eqref{laenge}, we obtain
\begin{align}
\label{laenge2}
 \ell &= \sum_{k=0}^{n}\ \sum_{\substack{v\in \cT\cap \NN^n:\\  |I(v)|=k \\ v \text{ is reachable}}}m_{\cT}(v) \leq \sum_{k=0}^{n}\ b_k 2^{n-1-k} =  \sum_{k=0}^{n}\ \sum_{\substack{J\subseteq[n]: \\ |J|=k }}  \ \prod_{\substack{j\in [n]\setminus J }} (a_j-1) \\
 &= \sum_{\substack{J\subseteq[n]}}  \ \prod_{\substack{j\in J }} (a_j-1)=\prod_{i\in [n]}a_i = \ell.
 \label{laenge3}
\end{align}
The second to last equality in \eqref{laenge3} holds by a standard inclusion-exclusion argument, or alternatively can be proven via induction.
We conclude that the inequality in \eqref{laenge2} must hold with equality. Thus, if $v\in \cT \cap \NN^n$ is reachable, then $m_{\cT}(v)=2^{n-1-|I(v)| }$. Recall that if $v$ is not reachable, then by Lemma~\ref{lem:Condition}, $m_{\cT}(v)=0$. This completes the proof of Theorem~\ref{Thm:Main2}.
\end{proof}

\section{Translation surfaces}\label{sec:TransSur}

A problem with analyzing the trajectory is that it changes its direction whenever it hits the boundary. We will overcome this issue by artificially extending the board in such a way that the trajectory on the new board is simply a single straight line. This will be done in two steps. In the first step, we extend the domain to $2\cT$. On this new domain the trajectory will not change its directions, but will not be connected in the Euclidean sense as a trade-off. In the second step, we embed $2\cT$ into $\RR^n$. On this domain the trajectory will be a straight line.\par\medskip
 We now consider $2\cT=[0,2a_1] \times\ldots\times[0,2a_n]$ instead of $\cT$. Let $e_i\in \mathbb{R}^n$ be the standard basis vector with entry $1$ in the $i$-th component. The domain $\cT$ is bounded by the hyperplanes 
$$P_i:= \mathrm{span}\{e_1, \ldots,e_{i-1},e_{i+1},\ldots,e_n \} \quad \text{and} \quad P_i':= a_i \cdot e_i + \mathrm{span}\{e_1, \ldots,e_{i-1},e_{i+1},\ldots,e_n \}.$$ Let $\tau_i$ be the reflection in the hyperplanes $P_i'$. We identify all points that are mapped to each other by the concatenation of some combination of the $\tau_i$, i.e. 
\begin{align}
\label{equivclass}
   v \sim w \ \Leftrightarrow\, \exists I \subseteq [n], \ I=\{i_1,\ldots,i_k\}: \tau_{i_1}\circ \ldots\circ \tau_{i_k} (v) = w.
\end{align}
This yields that a point $v=(v_1,\ldots,v_n)\in \cT$ is identified with all points from the set
\begin{align*}
   Q(v)= \left\{ \begin{pmatrix} y_1 \\ \vdots \\ y_n\end{pmatrix} \in 2\cT \,\middle\vert\, y_i \in \{v_i, 2a_i- v_i\}\right\},
\end{align*}
see \Cref{Fig:TransStruc} for an illustration. 
Points in the interior of $\cT$ have $2^n$ copies in $2\cT$. Points on a $k$-dimensional border only have $2^k$ copies.
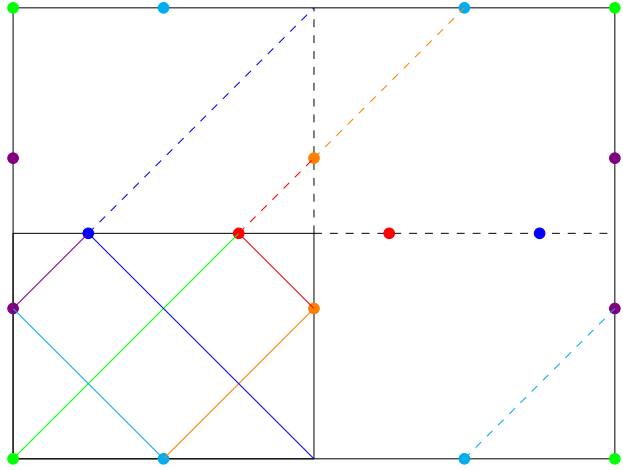
\begin{figure}[h]
\centering
\begin{tikzpicture}
    \draw[line width=0.1mm] (0,0) rectangle (4,3);
    \draw[line width=0.1mm] (0,0) rectangle (8,6);
    \draw[dashed, line width=0.1mm] (4,3) -- (8,3);
    \draw[dashed, line width=0.1mm] (4,3) -- (4,6);
    \filldraw[red] (3,3) circle (2pt);
    \filldraw[red] (5,3) circle (2pt);
    \filldraw[orange] (4,2) circle (2pt);
    \filldraw[orange] (4,4) circle (2pt);
    \filldraw[green] (0,0) circle (2pt);
    \filldraw[green] (8,0) circle (2pt);
    \filldraw[green] (0,6) circle (2pt);
    \filldraw[green] (8,6) circle (2pt);
    \filldraw[blue] (1,3) circle (2pt);
    \filldraw[blue] (7,3) circle (2pt);
    \filldraw[cyan] (6,0) circle (2pt);
    \filldraw[cyan] (2,0) circle (2pt);
    \filldraw[cyan] (6,6) circle (2pt);
    \filldraw[cyan] (2,6) circle (2pt);
    \filldraw[violet] (0,2) circle (2pt);
    \filldraw[violet] (0,4) circle (2pt);
    \filldraw[violet] (8,2) circle (2pt);
    \filldraw[violet] (8,4) circle (2pt); 
    \draw[green] (0,0) -- (3,3);
    \draw[red] (3,3) -- (4,2);
    \draw[orange] (4,2)-- (2,0);
    \draw[cyan] (2,0) -- (0,2);
    \draw[violet] (0,2) -- (1,3); 
    \draw[blue] (1,3) -- (4,0);
    \draw[red, dashed] (3,3) -- (4,4);
    \draw[orange, dashed] (4,4) -- (6,6);
    \draw[cyan, dashed] (6,0) -- (8,2);
    \draw[blue, dashed] (1,3) -- (4,6);
\end{tikzpicture}
\caption{The points with the same color are identified via the reflections. The old trajectory on $\cT$ is displayed with a solid line. 
The dashed line is the new trajectory on $2\cT$. Corresponding line segments in both trajectories are colored with the same color. Note that the green and purple line segments are coinciding.}
\label{Fig:TransStruc}
\end{figure}
\noindent In $2\cT$ we glue opposite sides together and obtain an $n$-dimensional torus. Instead of reflecting the direction of the trajectory on the border we continue the trajectory on the point opposite to the bouncing point, see also \Cref{Fig:TransStruc}. The advantage of this setting is that the direction of the trajectory is not changed at the border. This implies that each point can be crossed in exactly one way, therefore we now can count the number of identified points that are crossed to get $m_\cT(v)$. The drawback of this new setup is that the trajectory now is not connected (when displayed in $\mathbb{R}^n$), see \Cref{Fig:TransStruc}.\par\medskip

\noindent In the next step of our analysis we will even forget the gluing of the opposing sides and consider the whole space $\RR^n$. We identify points in $\RR^n$ with points in $2\cT$ via the function $\psi:\RR^n \to 2\cT$, given by
\begin{equation*}
    \psi(\begin{pmatrix} v_1\\ \vdots \\ v_n \end{pmatrix})= \begin{pmatrix} v_1 \bmod 2a_1 \\ \vdots \\ v_n \bmod 2a_n \end{pmatrix}.
\end{equation*}
If $v$ is of the form $(x,\ldots,x)$ we also use the following notation
\begin{equation*}
    \psi(x)= \begin{pmatrix} x \bmod 2a_1 \\ \vdots \\ x \bmod 2a_n
\end{pmatrix}.
\end{equation*}
By concatenation of the two identifications, via \eqref{equivclass} and $\psi$, we get another identification of points, such that $v=(v_1,\ldots,v_n)\in \cT$ is identified with all points from the set
\begin{align*}
   R(v)= \left\{ \begin{pmatrix} y_1 \\ \vdots \\ y_n\end{pmatrix} \in \RR^n\,\middle\vert\, y_i \equiv v_i \bmod 2a_i\ \ \text{or} \ \ y_i \equiv - v_i \bmod 2a_i \right\},
\end{align*}
Now the entire trajectory in $\mathbb{R}^n$ is a straight line in direction $d=(1,\ldots,1)$ which starts at the origin and, by \Cref{lem:LengthTrajectory}, ends at $\sqrt{n} \cdot (\ell,\ldots,\ell)$.

\section{Determining crossing numbers via CSPs: Proof of Theorem~\ref{Thm:Main}}\label{sec:mainProof}

Recall that for the proof of Theorem~\ref{Thm:Main} the side lengths $a_i$ can be assumed to be integers. We do not assume them to be additionally pairwise coprime.\par\medskip

We use the language of \emph{constraint satisfaction problems} (CSP). 
	Let $\P$ be the family of all subsets of $\{0,1\} \times \{0,1\}$. The elements of $\P$ are referred to as \emph{constraints}.
	A (boolean) CSP on a finite set $V\subseteq \mathbb{N}$ is a pair $(V,\phi)$, where $\phi: \binom{V}{2} \rightarrow \P$ is a function assigning a constraint for each pair of vertices. 
	An \emph{assignment} on $V$ is a function $g: V \rightarrow \{0,1\}$ which assigns for every $v\in V$ an integer from $\{0,1\}$. An assignment $g: V \rightarrow \{0,1\}$ is \emph{satisfying} for $(V,\phi)$ if 
	$(g(a),g(b)) \notin \phi(\{a,b\})$ for any pair $a,b\in V$ such that $a<b$.
	For a CSP $G = (V,\phi)$, denote by $A(G)$ the set of satisfying assignments for $G$. Define 
	\begin{align*}
		\mathcal{C}:=\bigg\{\emptyset,\Big\{(0,0),(1,1)\Big\},\Big\{(0,1),(1,0)\Big\},\Big\{(0,0),(0,1),(1,0),(1,1)\Big\}\bigg\}\subseteq \P,
	\end{align*}
and, given $v\in \mathbb{N}^n \cap \cT$, a function $\phi_v: \binom{[n]}{2} \rightarrow \C$ by 
\begin{align*}
\phi_v(ij):=\begin{cases}  \emptyset&,  \text{if}\ v_i\equiv -v_j \ \text{and} \ v_i\equiv v_j \bmod \gcd(2a_i,2a_j), \\
 \{(0,1),(1,0)\}&,  \text{if}\ v_i\equiv v_j \ \text{and} \ v_i\not\equiv -v_j \bmod \gcd(2a_i,2a_j), \\
 \{(0,0),(1,1)\}&,   \text{if}\ v_i\equiv-v_j \ \text{and} \ v_i \not\equiv v_j \bmod \gcd(2a_i,2a_j), \\
 \{(0,0),(0,1),(1,0),(1,1)\}&,   \text{if}\ v_i \not\equiv -v_j \ \text{and} \ v_i \not\equiv v_j \bmod \gcd(2a_i,2a_j). 
\end{cases}
\end{align*}
Let $v, x, a \in \ZZ^n$. We write $v \equiv x \bmod a$ iff $v_i \equiv x_i \bmod a_i$ for all $i \in [n]$. 
Given $v\in \mathbb{N}^n \cap \cT$, define $J(v):=\{i\in [n]: \ v_i \equiv -v_i \bmod 2a_i \}$.
The following lemma sets the crossing number in relation to the number of satisfying assignments of a CSP.
\begin{lemma}
\label{connectionCSPc}
Let $v\in \mathbb{N}^n \cap \cT$ be a non-corner and $G=([n],\phi_v)$. Then,  
\begin{align*}
m_{\cT}(v)=  \frac{1}{2^{|J(v)|+1}}|A(G)|.
\end{align*}
\end{lemma}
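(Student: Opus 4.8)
The plan is to convert the geometric quantity $m_\cT(v)$ into an arithmetic count and then match it against $|A(G)|$ by a double count. By the construction in Section~\ref{sec:TransSur}, the trajectory is the straight segment $\{(t,\dots,t):0\le t\le \ell\}$ (of Euclidean length $\sqrt n\,\ell$), and $m_\cT(v)$ equals the number of its points lying in $R(v)$. A point $(t,\dots,t)$ lies in $R(v)$ exactly when, for every $i\in[n]$, $t\equiv v_i$ or $t\equiv -v_i \pmod{2a_i}$; any such $t$ is automatically an integer. Call such a $t$ a \emph{hit}, and for $r>0$ set $H_r:=\{t\in\ZZ:\,0\le t<r,\ t\text{ is a hit}\}$. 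Since $v$ is a non-corner, a direct check shows that $t=0$ and $t=\ell$ are not hits, so $m_\cT(v)=|H_\ell|$.

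Next I would record what the CSP computes. For an assignment $g\in\{0,1\}^n$ put $b_i:=(-1)^{g(i)}v_i \bmod 2a_i$. Applying the Chinese Remainder Theorem (\Cref{Thm:ChineseRemainder}) with the moduli $2a_i$, and using $\lcm(2a_1,\dots,2a_n)=2\ell$, the system $t\equiv b_i \pmod{2a_i}$ $(i\in[n])$ has a solution---necessarily unique modulo $2\ell$---if and only if $b_i\equiv b_j \pmod{\gcd(2a_i,2a_j)}$ for every pair $i<j$. Writing out the four sign patterns $(g(i),g(j))$ and comparing $b_i\equiv b_j$ against the congruences $v_i\equiv v_j$ and $v_i\equiv -v_j \pmod{\gcd(2a_i,2a_j)}$ shows that $\phi_v(ij)$ is precisely the set of \emph{incompatible} pairs; hence $g\in A(G)$ if and only if the full system $t\equiv b_i\pmod{2a_i}$ is solvable.

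The heart of the argument is a double count of
\[
S:=\bigl\{(t,g):\ t\in\ZZ\cap[0,2\ell),\ g\in\{0,1\}^n,\ t\equiv(-1)^{g(i)}v_i \bmod 2a_i\ \text{for all }i\bigr\}.
\]
Grouping by $g$: each $g\in A(G)$ contributes its unique residue $t\in[0,2\ell)$ and each $g\notin A(G)$ contributes none, so $|S|=|A(G)|$. Grouping by $t$: only hits $t\in H_{2\ell}$ can occur, and for such a $t$ there are exactly two admissible values of $g(i)$ when $i\in J(v)$ (there $v_i\equiv -v_i$, so the two congruences coincide) and exactly one when $i\notin J(v)$ (there $t$ meets exactly one of $t\equiv\pm v_i$); thus $|S|=2^{|J(v)|}\,|H_{2\ell}|$.

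Finally I would relate $|H_{2\ell}|$ to $m_\cT(v)=|H_\ell|$ through the involution $t\mapsto 2\ell-t$ on $\ZZ/2\ell$: it preserves the hit condition (if $t\equiv\pm v_i$ then $2\ell-t\equiv\mp v_i$), and its only fixed points $0$ and $\ell$ are not hits for the non-corner $v$, so it pairs $H_{2\ell}\cap(0,\ell)$ bijectively with $H_{2\ell}\cap(\ell,2\ell)$, giving $|H_{2\ell}|=2|H_\ell|=2\,m_\cT(v)$. Combining the two expressions for $|S|$ yields $|A(G)|=2^{|J(v)|+1}m_\cT(v)$, as claimed. I expect the main obstacle to be this last factor of $2$: the trajectory sweeps only half of the torus period $2\ell$, so it must be isolated carefully (via the involution, together with the fact from Section~\ref{sec:TransSur} that each identified copy is crossed at most once), alongside the bookkeeping of reducing $\pm v_i$ modulo $2a_i$ when invoking \Cref{Thm:ChineseRemainder}.
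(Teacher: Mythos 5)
Your proposal is correct and follows essentially the same route as the paper: reduce $m_{\cT}(v)$ to counting solutions $x\in\{0,\ldots,2\ell-1\}$ of the system $x\equiv\pm v_i \bmod 2a_i$ (using the involution $x\mapsto 2\ell-x$ with non-hit fixed points $0,\ell$ to account for the factor $\tfrac12$), then apply the Chinese Remainder Theorem to identify solvable sign patterns with satisfying assignments, with the factor $2^{|J(v)|}$ coming from indices where $v_i\equiv -v_i \bmod 2a_i$. Your double count of the set $S$ is just a slightly more explicit packaging of the paper's observation that two satisfying assignments yield the same congruence system iff they agree off $J(v)$.
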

\begin{proof}
Let $a:= (a_1,\ldots,a_n)$ and $\ell=\lcm(a_1,a_2,\ldots,a_n)$. 
The trajectory in $\cT$ passes through $v$ exactly if the trajectory in $\RR^n$ passes through a point which is in $R(v)$, i.e. if the following system of equations has one solution $x\in \{0,1,\ldots,\ell\}$ for all $i \in [n]$:
\begin{equation}
    \label{system}
    x \equiv v_i \bmod 2a_i \quad \text{or} \quad x \equiv - v_i \bmod 2a_i.
\end{equation}
Note that if $v$ satisfies $v_i \equiv -v_i \bmod 2a_i$ for some $i$, then there is only one equation in the $i$-th condition of \eqref{system}. Observe that iff $x\in \{1,\ldots,\ell-1\}$ satisfies \eqref{system}, then also $2\ell-x\in \{\ell+1,\ldots,2\ell-1\}$ satisfies \eqref{system}. Also $x=0$ and $x=\ell$ do not 
 satisfy \eqref{system}, because $\psi(0)=0$ and $\psi(\ell)\equiv 0 \bmod a$.  Therefore,
\begin{align}
\label{count}
\nonumber
m_{\cT}(v)&=\lvert\big\{x\in \{0,1,\ldots,\ell-1\}: \ x \equiv v_i \bmod 2a_i \ \text{or} \ x \equiv - v_i \bmod 2a_i \quad \text{for all } i\in [n] \big\}\rvert\\
&= \frac{1}{2}\lvert\big\{x\in \{0,1,\ldots,2\ell-1\}:  \ x \equiv v_i \bmod 2a_i \ \text{or} \ x \equiv - v_i \bmod 2a_i \quad \text{for all } i\in [n] \big\}\rvert.
\end{align}
By the Chinese Remainder Theorem, \Cref{Thm:ChineseRemainder}, iff $g: [n] \rightarrow \{0,1\}$ is a satisfying assignment for $G$, then there exists a unique solution $x\in \{0,1,\ldots,2\ell-1\}$ to the system of equations
\begin{align}
\label{sysequations}
x \equiv (-1)^{g(i)}v_i \bmod 2a_i \quad \text{for } i\in [n]. 
\end{align}
Two satisfying assignments $g,g'$ for $G$ correspond to the same set of equations in \eqref{sysequations} iff $g(i)=g'(i)$ for all $i\in[n]\setminus J(v)$. We conclude that 
\begin{align*}
m_{\cT}(v)&= \frac{1}{2}\lvert\big\{x\in \{0,1,\ldots,2\ell-1\}:  \ x \equiv v_i \bmod 2a_i \ \text{or} \ x \equiv - v_i \bmod 2a_i \quad \text{for all } i\in [n] \big\}\rvert \\
&= \frac{1}{2} \cdot \frac{1}{2^{|J(v)|}}|A(G)|=  \frac{1}{2^{|J(v)|+1}}|A(G)|,
\end{align*}
completing the proof of this lemma.
\end{proof}

\begin{lemma}
		\label{CSPcount}
		Let $n \in \mathbb{N}$, $\phi: \binom{[n]}{2} \rightarrow \mathcal{C}$ and $G = ([n],\phi)$ be a CSP. Then 
\begin{align*}
|A(G)|\in \{0 \}  \cup \{2^k\mid k\geq 1\}.
\end{align*}
\end{lemma}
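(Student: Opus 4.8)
The constraints in $\mathcal{C}$ are very special: three of them are symmetric binary relations on $\{0,1\}$. Reading each constraint $\phi(ij)$ as a \emph{forbidden} set, the three nonempty proper constraints are $\{(0,1),(1,0)\}$ (forbidding $g(i)\neq g(j)$, i.e.\ forcing \emph{equality}), $\{(0,0),(1,1)\}$ (forbidding $g(i)=g(j)$, i.e.\ forcing \emph{inequality}), and $\{(0,0),(0,1),(1,0),(1,1)\}$ (forbidding everything). The plan is to reinterpret the CSP as a graph/parity problem and count satisfying assignments via connected components. First I would dispose of the full constraint: if $\phi(ij)$ equals the full set $\{(0,0),(0,1),(1,0),(1,1)\}$ for some pair, then no assignment can satisfy it, so $A(G)=\emptyset$ and $|A(G)|=0$, landing in the first case. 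So from now on I assume every $\phi(ij)\in\{\emptyset,\{(0,1),(1,0)\},\{(0,0),(1,1)\}\}$.

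Next I would build an auxiliary labeled graph $H$ on vertex set $[n]$, putting an edge between $i$ and $j$ exactly when $\phi(ij)\neq\emptyset$, and labeling that edge ``$=$'' if the constraint forces $g(i)=g(j)$ (the $\{(0,1),(1,0)\}$ case) and ``$\neq$'' if it forces $g(i)\neq g(j)$ (the $\{(0,0),(1,1)\}$ case). A satisfying assignment is then precisely a $\{0,1\}$-coloring of $[n]$ respecting all edge labels. The key observation is that on each connected component of $H$, fixing the value at one vertex propagates a forced value to every other vertex along any spanning path: an ``$=$'' edge copies the value, a ``$\neq$'' edge flips it. Thus on a given component there are either exactly $2$ consistent colorings (obtained from the two choices at the root) or none at all. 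The ``none'' case arises exactly when the component contains a cycle whose product of edge-signs is inconsistent, i.e.\ a cycle with an odd number of ``$\neq$'' edges — in that case following two different paths to the same vertex forces contradictory values, so that component (and hence the whole CSP) has no valid coloring, giving $|A(G)|=0$.

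Putting the components together multiplicatively: if every component is consistent, then choosing the root value independently in each of the, say, $c$ connected components yields $|A(G)|=2^{c}$, and since $c\geq 1$ whenever $n\geq 1$ this is a power of $2$ with exponent $k=c\geq 1$; if some component is inconsistent, $|A(G)|=0$. Either way $|A(G)|\in\{0\}\cup\{2^k:k\geq 1\}$, as claimed. The only mildly delicate point — the step I would be most careful about — is verifying the consistency/multiplicativity claim rigorously, namely that each component contributes a factor of exactly $2$ (not more) when consistent and that inconsistency is correctly characterized by the odd-parity-cycle condition; this is a standard spanning-tree-plus-fundamental-cycles argument (equivalently, viewing the forced relations as an affine system over $\mathbb{F}_2$ and noting its solution set is either empty or an affine subspace of dimension equal to the number of components), so no serious obstacle is expected, only routine care with the bookkeeping.
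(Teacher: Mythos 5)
Your proposal is correct and takes essentially the same route as the paper: an auxiliary graph with edges at the nonempty constraints, spanning-tree propagation showing each connected component admits either $0$ or exactly $2$ satisfying assignments, and multiplicativity over the $c$ components giving $|A(G)|=0$ or $2^{c}$. The only cosmetic differences are that you dispose of the full constraint $\{(0,0),(0,1),(1,0),(1,1)\}$ up front (the paper absorbs it into its ``at most one assignment per root value'' induction) and that you justify the $0$-versus-$2$ dichotomy via the odd-$\neq$-cycle/affine-$\mathbb{F}_2$ criterion, where the paper instead uses the global bit-flip symmetry $g \mapsto 1-g$.
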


	\begin{proof}
Construct an auxiliary graph $H$ from the CSP $G = ([n],\phi)$ as follows. The vertex set of $H$ is $V(H)=[n]$ and the edge set $E(H)=\{e\in \binom{[n]}{2}: \ \phi(e)\neq \emptyset\}$. Let $C_1,\ldots, C_k \subseteq [n]$ be the connected components of $H$. For $i\in [k]$, let $$G[C_i]:=\left(C_i,\restr{\phi}{\binom{C_i}{2}}\right)$$ be the \emph{induced CSP} of $G$ on $C_i$. Then $g\in A(G)$ iff $\restr{g}{C_i}\in A(G[C_i])$ for all $i\in [k]$, simply because for every pair $ab$ with $a<b, a\in C_i, b\in C_j$ for $i\neq j$  we have $\phi(ab)=\emptyset$. We conclude
\begin{align*}
|A(G)|=\prod_{i=1}^k |A(G[C_i])|.
\end{align*}
Next, we show that $|A(G[C_i])|\in \{0,2\}$ for $i\in [k]$, completing the proof of this lemma. 
Let $i\in [k]$. Then, since $H[C_i]$ is connected, there exists a spanning tree $T\subseteq H[C_i]$. Let $C_i=\{w_1,w_2,\ldots, w_{|C_i|}\}$ such that $T[W_j]$, where $W_j:=\{w_1,\ldots,w_j\}$, is a tree. We prove inductively, for $j\in [|C_i|]$ that there exists at most one satisfying assignment $g$ of $G[W_j]$ such that $g(w_1)=0$. For $j=1$, this is trivial. Now, let $j\in [|C_i|]$ such that there exists at most one satisfying assignment $g$ of $G[W_j]$ such that $g(w_1)=0$. If there does not exist a satisfying assignment of $G[W_j]$ such that $g(w_1)=0$, then in particular there does not exist a satisfying assignment of $G[W_{j+1}]$ such that $g(w_1)=0$. Let $e=xw_{j+1}\in E(T)$ be the unique edge such that $x\in W_j$. Now, assume there exists a satisfying assignment $g$ of $G[W_j]$ such that $g(w_1)=0$. If $g'$ is a satisfying assignment of $G[W_{j+1}]$, then $g'(w_a)=g(w_a)$ for all $a \in [j]$ and
\begin{align*}
g'(w_{j+1})=\begin{cases}
0, &\text{if }  g(x)=1   \text{ and } \phi(e)=\Big\{(0,0),(1,1)\Big\}, \\
1, &\text{if }  g(x)=1   \text{ and } \phi(e)=\Big\{(0,1),(1,0)\Big\},\\
1, &\text{if }  g(x)=0   \text{ and } \phi(e)=\Big\{(0,0),(1,1)\Big\}, \\
0, &\text{if }  g(x)=0   \text{ and } \phi(e)=\Big\{(0,1),(1,0)\Big\}.\\
\end{cases}
\end{align*}
Thus, there is at most one satisfying assignment $g'$ of $G[W_{j+1}]$ such that $g'(w_1)=0$. Inductively, we conclude that there is at most one satisfying assignment of $G[C_i]$ such that $g(w_1)=0$. Similarly, there is at most one satisfying assignment of $G[C_i]$ such that $g(w_1)=1$. Thus, $|A(G[C_i])|\leq 2$. Further, if $g$ is a satisfying assignment of $G[C_i]$, then $g': C_i \rightarrow \{0,1\}$ defined by $g'(j)=1-g(j)$ for $j\in C_i$ is also a satisfying assignment of $G[C_i]$. We conclude $|A(G[C_i])|\in \{0,2\}$.
\end{proof}

\begin{proof}[Proof of \Cref{Thm:Main}]
If $v$ is a corner, then $m_{\cT}(v)=0$ or $m_{\cT}(v)=1$ simply because the trajectory starts at the origin and its run is over when it hits a corner the first time. If $v$ is a non-corner, then Theorem~\ref{Thm:Main} follows by combining Lemma~\ref{connectionCSPc} with Lemma~\ref{CSPcount}.
\end{proof}

\section{Concluding Remarks}
The reader may ask how the crossing number varies as the set-up of the board changes. We note that for different rational starting slopes $d$ and for parallelepipeds instead of boxes, the result is not changed, i.e. the crossing number of a given point is still $0$ or a power of $2$. This observation follows by considering a linear transformation that transforms the new set-up into the old one without changing the crossing number.

\begin{lemma}\label{lem:Extension}
    Let $\Phi: \RR^n \to \RR^n$ be an isomorphism, $\cT \subset \RR^n$ and $d$ the starting direction of the trajectory. Then $m_{\cT}(v) = m_{\Phi(\cT)}(\Phi(v))$, whereby $\Phi(d)$ is the starting direction on $\Phi(\cT)$. 
    \begin{proof}
        We consider the trajectory as an object consisting of multiple line segments. All line segments intersecting in $v$ are mapped to line segments intersecting in $\Phi(v)$, so $m_{\cT}(v) \leq m_{\Phi(\cT)}(\Phi(v))$ holds. Since $\Phi$ is an isomorphism it is invertible and therefore by the same argument 
        
        \begin{equation}
        \label{isomorph}
        m_{\cT}(v) \leq m_{\Phi(\cT)}(\Phi(v)) \leq m_{\Phi^{-1}(\Phi(\cT))}(\Phi^{-1}(\Phi(v)))=m_{\cT}(v),
        \end{equation}
        Thus the inequalities in \eqref{isomorph} hold with equality, as desired.
    \end{proof}
\end{lemma}
\noindent To see that \Cref{Thm:Main} also holds in a box $A$ with a different starting direction $d\in \mathbb{Q}^n$ one maps this box $A$ to another box $\Phi(A)$ in such a way that the new starting direction is $\Phi(d)=(1,1,\ldots,1)$ again.\par
\noindent Additionally, if one considers the domain to be parallelepiped instead of a box, one can map it to a box via an isomorphism.

\section*{Acknowledgments}
We thank the members of Kaffeerunde IAG at KIT as well as Cameron Gates Rudd for stimulating discussions on the topic. 

\bibliographystyle{abbrv}
\bibliography{billiards_arxiv_3_06}

\begin{thebibliography}{10}

\bibitem{Birkhoff}
G.~D. Birkhoff.
\newblock {On the periodic motions of dynamical systems}.
\newblock {\em Acta Mathematica}, 50:359 -- 379, 1927.

\bibitem{BilliardNdim}
C.~da~Veiga~Bruno.
\newblock Arithmetic billiards, 2019.
\newblock Thesis, University of Luxembourg,available at
  \url{https://math.uni.lu/eml/assets/reports/2019/Carvalho-da-Veiga.pdf}.

\bibitem{Gardner}
M.~Gardner.
\newblock {\em Martin Gardner's Sixth Book of Mathematical Diversions from
  Scientific American}.
\newblock The University of Chicago Press, 1983.

\bibitem{veech}
S.~Lelièvre.
\newblock Veech surfaces associated with rational billiards, 2002.

\bibitem{Yangcheng}
Y.~Li.
\newblock A new perspective of arithmetic billiards, 2024.
\newblock Available at \url{https://arxiv.org/abs/2306.10501}.

\bibitem{McMullen}
C.~T. McMullen.
\newblock Billiards and teichmüller curves on hilbert modular surfaces.
\newblock {\em Journal of the American Mathematical Society}, 16(4):857--885,
  2003.

\bibitem{ABilliards}
A.~Perucca, J.~R. De~Sousa, and S.~Tronto.
\newblock Arithmetic billiards.
\newblock {\em Recreational Mathematics Magazine}, 9(16):43--54, 2022.

\bibitem{Steinhaus}
H.~Steinhaus.
\newblock {\em Mathematical Snapshots}.
\newblock Dover Publications, U.S.A., 3 edition, 1999.

\bibitem{taba}
S.~Tabachnikov.
\newblock {\em Geometry and billiards}, volume~30.
\newblock American Mathematical Soc., 2005.

\bibitem{Veech1989}
W.~Veech.
\newblock Teichmüller curves in moduli space, eisenstein series and an
  application to triangular billiards.
\newblock {\em Inventiones mathematicae}, 97(3):553--584, 1989.

\bibitem{Veech1992}
W.~Veech.
\newblock The billiard in a regular polygon.
\newblock {\em Geometric and functional analysis}, 2(3):341--380, 1992.

\end{thebibliography}
\end{document}